\documentclass[12pt,a4paper]{amsart}

\usepackage{amsmath,amssymb,amsthm,mathtools}
\usepackage{geometry}
\usepackage{enumitem}
\usepackage{microtype}
\usepackage{tikz}
\usepackage[colorlinks=true,linkcolor=blue,urlcolor=blue,citecolor=blue]{hyperref}

\allowdisplaybreaks

\newtheorem{theorem}{Theorem}[section]
\newtheorem{lemma}[theorem]{Lemma}

\newtheorem{corollary}[theorem]{Corollary}
\newtheorem{problem}[theorem]{Problem}
\theoremstyle{definition}

\theoremstyle{remark}
\newtheorem{remark}[theorem]{Remark}

\newcommand{\cP}{\mathcal P}
\newcommand{\cT}{\mathcal T}

\usepackage{xcolor}
\definecolor{wxpink}{HTML}{FF1493} 

\title{Hypergraphs of Low Degree Without Independent Transversals}

\author[Xinzhe Song]{Xinzhe Song}
\author[Xia Wang]{Xia Wang\textsuperscript{*}}

\thanks{\textsuperscript{*}Both authors are supported by the Institute for Basic Science (IBS-R029-C4) and affiliated with the Extremal Combinatorics and Probability Group (ECOPRO), Institute for Basic Science (IBS), Daejeon, South Korea.  Xinzhe Song is also affiliated with the Academy of Mathematics and Systems Science, Chinese Academy of Sciences, Beijing, China, and the University of Chinese Academy of Sciences, Beijing, China. Xia Wang is also affiliated with the School of Mathematics, Shandong University, Jinan, China, and supported by the China Scholarship Council. Emails: \texttt{songxinzhe@amss.ac.cn}, \texttt{xiawang@mail.sdu.edu.cn}.}

\date{}

\begin{document}
\begin{abstract}
For every fixed $k\ge 3$ and every integer $b\ge 2$, we construct a finite vertex-partitioned $k$-uniform hypergraph with blocks of size $b$ and no independent transversal. We show that the maximum number of edges meeting a block is $\left(\frac{(k-1)^{k-1}}{k^k}+O_k(b^{-1})\right)b^k,$
while the maximum vertex degree is $\left(\frac{(k-2)^{k-2}}{(k-1)^{k-1}}+O_k(b^{-1})\right)b^{k-1}$. This shows that the known bound on the number of edges meeting a block is asymptotically sharp even when the leading constant in the maximum vertex degree tends to zero, which answers a problem of Groenland, Kaiser, Treffers, and Wales.

By blowing up the same construction, we further obtain for arbitrarily large $n$, there exists a $k$-uniform hypergraph with $n$ blocks, each of size at least $n$, and $\Delta(G)<\left(1-\frac1k\right)^{k-1}n^{k-1}$, but with no independent transversal. This gives a negative answer to a problem of Haxell and Wdowinski.
\end{abstract}

\maketitle

\section{Introduction}

Let $H$ be a hypergraph and $\cP=\{V_1,\ldots, V_m\}$ be a partition of $V(H)$, where each $V_i$ is called a block of $\cP$. A \emph{transversal} of $\cP$ is a set containing exactly one vertex from each block $V_i$. An \emph{independent transversal} of $\cP$ is a transversal $T$ such that $T$ is an independent set in $H$.
Independent transversals form a natural framework for a number of problems in extremal and probabilistic combinatorics, with connections to list coloring, linear arboricity, strong graph coloring, and matching problems; see, for instance,
\cite{Alon1988,GlockSudakov2022,Haxell2001,Haxell2004,Haxell2016,LohSudakov2007}.
Their systematic study goes back to a problem of Bollob\'as, Erd\H{o}s, and Szemer\'edi \cite{BollobasErdosSzemeredi1975}. Early work on vertex-partition transversals and strong colorings was developed by Fellows~\cite{Fellows1990} and Alon~\cite{Alon1992}, respectively. 

In the graph setting, the fundamental question is how large the blocks must be, relative to the maximum degree, in order to force an independent transversal. A classical theorem of Haxell \cite{Haxell2001} shows that every graph of maximum degree at most $d$ has an independent transversal with respect to every partition whose blocks have size at least $2d$. This is best possible: constructions of Szab\'o and Tardos \cite{SzaboTardos2006} show that blocks of size $2d-1$ do not suffice in general. For a fixed number of blocks, extremal degree thresholds were investigated by Jin~\cite{Jin1992} and Yuster~\cite{Yuster1997}, and subsequently determined in full by Haxell and Szab\'o~\cite{HaxellSzabo2006}. Substantially better bounds are possible under additional sparsity assumptions on the interaction between different blocks. For instance, Loh and Sudakov \cite{LohSudakov2007} showed that if the
local degree, that is, the maximum number of neighbors of a vertex in a single block, is $o(d)$, then blocks of size $(1+\varepsilon)d$ suffice for every fixed $\varepsilon>0$. For related results in this direction, see, for example, \cite{KangKelly2022,GlockSudakov2022,ChakrabortiTran2025}. In another direction, Haxell and Wdowinski~\cite{HaxellWdowinski2024a} obtained degree conditions combining the maximum degree and block average degree settings, also give the stability results. Explicit constructions of graphs without independent transversals have also been investigated~\cite{AharoniHolzmanHowardSprussel2015,HaxellWdowinski2024b}.

For uniform hypergraphs, however, far fewer results are known. Here it is useful to distinguish the edges that are actually relevant to a transversal. We call an edge \emph{stretched} if its vertices
lie in distinct blocks of $\cP$. Since a transversal contains at most one vertex from each block, only stretched edges can be contained in a transversal. Thus, when studying independent transversals, it is natural to measure the density using only stretched edges. In particular, two parameters arise naturally: the number of stretched edges meeting a given block and the maximum degree of a vertex.
Wanless and Wood~\cite{WanlessWood2022} proved that if $H$ is a $k$-uniform hypergraph with a partition $\cP=\{V_i\}$ such that $|V_i|\ge b$, and every block $V_i$ meets at most $\beta_k b^{k-1}|V_i|$ stretched edges, where $\beta_k:=(k-1)^{k-1}/k^k$, then $H$ has an independent transversal. Later, Groenland, Kaiser, Treffers, and Wales~\cite{GroenlandKaiserTreffersWales2023} constructed examples showing that the coefficient $\beta_k$ is asymptotically sharp.  This led them to ask whether such counterexamples can also have substantially smaller maximum vertex degree. They asked the following question.

\begin{problem}[Groenland--Kaiser--Treffers--Wales \cite{GroenlandKaiserTreffersWales2023}, Problem 13]
\label{prob:GKTW}
Is there a constant $0<\eta<1$ such that, for every $k$, every $\varepsilon>0$, and all sufficiently large integers $b$, there exists a $k$-uniform hypergraph $H$ with a vertex partition $\cP=\{V_i\}$ such that every block has size at least $b$ and meets at most
$(\beta_k+\varepsilon)b^k$ stretched edges, every vertex has degree at most
$\eta b^{k-1}$, and $H$ has no independent transversal? Can one moreover take
$\eta=\eta(k)\to0$ as $k\to\infty$?
\end{problem}

A second question, posed by Haxell and Wdowinski~\cite{HaxellWdowinski2024b}, asks whether a maximum-degree condition alone can guarantee an independent transversal when the block size is determined by the number of blocks.

\begin{problem}[Haxell--Wdowinski~\cite{HaxellWdowinski2024b}, Problem 26]\label{prob:HW}
Let $G$ be a $k$-uniform hypergraph with maximum degree $d$, and $\cP=\{V_1,\ldots,V_n\}$ be a partition of $V(G)$. Do the conditions
$|V_i|\ge n$ for every $i$ and
$d\le(1-1/k)^{k-1}n^{k-1}$ guarantee an independent transversal?
\end{problem}

We give an explicit tree construction $H_{b,k}$ (in Section \ref{sec:tree}) that applies to both problems, which is inspired by the recursive constructions of Groenland, Kaiser, Treffers, and Wales~\cite{GroenlandKaiserTreffersWales2023}. The construction of Groenland et al.\ uses complete Cartesian products in its recursive forcing mechanism, which leads to vertices of relatively large degree. Our tree-based construction serializes this forcing along root-to-leaf paths, allowing the maximum-degree coefficient to decrease to $0$ as $k\to\infty$ while preserving the optimal blockwise edge scale.

\begin{theorem}\label{thm:tree-main}
For each fixed integer $k\ge3$ and every integer $b\ge2$, there exists a finite $k$-uniform hypergraph $H_{b,k}$ with a vertex partition $\cP_{b,k}$ such that every block has size $b$, every edge is stretched, and $H_{b,k}$ has no independent transversal. Moreover, as $b\to\infty$, we have

$$\max_{V\in\cP_{b,k}} \bigl|\{e\in E(H_{b,k}):e\cap V\ne\varnothing\}\bigr| =\left(\beta_k+O_k(b^{-1})\right)b^k$$
and
$$\Delta(H_{b,k})=\left(c_k+O_k(b^{-1})\right)b^{k-1},$$
where $\beta_k:=\frac{(k-1)^{k-1}}{k^k}$ and $c_k:=\frac{(k-2)^{k-2}}{(k-1)^{k-1}}$.
\end{theorem}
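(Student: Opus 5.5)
The plan is to build $H_{b,k}$ as a rooted tree of blocks, each of size $b$. In every block $W$ we mark an \emph{allowed} set $A_W\subseteq W$; the vertices of $W\setminus A_W$ are \emph{heads}. Each head $v$ gets its own gadget made of child blocks. The edges at $v$ have the form $\{v\}\cup\{u_1,\dots,u_{k-1}\}$ with the $u_j$ in distinct child blocks and each $u_j$ in the allowed set of its block. Such an edge forbids choosing $v$ together with allowed vertices in all the relevant children.

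To keep vertex degrees small, we do not use the complete product $\{v\}\times A_{W_1}\times\dots\times A_{W_{k-1}}$ as in the Groenland--Kaiser--Treffers--Wales construction. Instead we serialize: the gadget of $v$ is a path-like chain of children. Each child's allowed set is split into parts, and each part is tied, through further edges, to the next child in the chain. The edges through $v$ then form a tree of root-to-leaf paths rather than a full product. The root block has $A=\varnothing$, so all of its vertices are heads. At the leaves the allowed set is the whole block. The depth is bounded in terms of $k$, so the hypergraph is finite. The parameters to fix are the sizes $|A_W|$, roughly $\frac{k-1}{k}b$ for the block-count optimization, and the splitting proportions along the chains, roughly in ratio $\frac{k-2}{k-1}$. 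I would choose them so that all sizes are integers up to $O_k(1)$ rounding, which is the source of the $O_k(b^{-1})$ error terms. Every edge is stretched by construction, since each edge meets $k$ different blocks.

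\emph{No independent transversal.} I would prove, by induction from the leaves upward, the following statement. For every block $W$ and every transversal $T$ of the subtree rooted at $W$ that avoids all edges inside that subtree, $T\cap W\subseteq A_W$. Inductive step: suppose $T$ picks a head $v$ of $W$. By induction, each child block in $v$'s gadget is forced to pick an allowed vertex. The serialized chain is designed so that these forced choices always assemble into one of the edges at $v$. Checking this needs a small secondary induction along the chain: the choice in the $j$-th child forces the $(j+1)$-st child into a specific part of its allowed set. Applying the claim at the root, where $A_{\mathrm{root}}=\varnothing$, shows that no independent transversal exists.

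\emph{Counting.} A non-root block $W$ meets two kinds of edges. The first kind are the edges coming from its parent gadget, about $|A_W|\cdot(\text{fan-out})$ of them, which is a lower-order term. The second kind are the edges it heads, $(b-|A_W|)\cdot(\#\text{ paths per head})$. With $|A_W|=xb$ the leading term is $x^{k-1}(1-x)b^k$. This is maximized at $x=\frac{k-1}{k}$, giving $\beta_k b^k$, so the sizes are chosen to realize exactly this value. For the degree, a head vertex lies on all its paths, but each intermediate vertex lies only on the paths through its own part. Optimizing the split ratio $y$ leads to the expression $y^{k-2}(1-y)$, whose maximum is $c_k$, attained at $y=\frac{k-2}{k-1}$. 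The main obstacle is reconciling three requirements at once: the forcing must stay valid after serialization, the head degree must drop to $c_kb^{k-1}$, and the block count must remain $\beta_kb^k$ rather than growing. If the naive path split inflates the head degree, the first fix I would try is to redistribute the head's edges across several copies of the chain with different part orders, and then recompute both maxima. After that, the matching upper and lower bounds with $O_k(b^{-1})$ error are obtained by direct bookkeeping of the floors in the part sizes.
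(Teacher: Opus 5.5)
There is a genuine gap, in two linked places. First, the serialized forcing is never constructed, and the invariant you induct on, $T\cap W\subseteq A_W$, cannot support it. If all the hypothesis tells you about the gadget blocks $W_1,\dots,W_{k-1}$ of a head $v$ is that an edge-free transversal picks an allowed vertex in each, then excluding $v$ requires the edges at $v$ to contain every tuple of $A_{W_1}\times\cdots\times A_{W_{k-1}}$. That is exactly the full product you wanted to avoid. A sparser edge set at $v$ only suffices if the children are correlated, with the choice in $W_j$ confining $W_{j+1}$ to one part. But the ``further edges'' doing this are themselves $k$-uniform edges that need their own forcing and load accounting. Your invariant also carries no information about parts, so the secondary induction has nothing to run on.

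Second, even granting the forcing, your parameters cannot give $c_k$. A head lies on all of its edges, and by your own accounting a block's load is carried by its $(1-x)b=b/k$ heads, with parent edges of lower order. Consider a block meeting about $\beta_kb^k$ edges; you aim for one, and Wanless--Wood forces one anyway. Some head there has degree at least about $k\beta_kb^{k-1}=(1-\frac1k)^{k-1}b^{k-1}$, which is $\Theta(k)$ times $c_kb^{k-1}$ (for $k=3$: $4/9$ versus $1/4$). Spreading a head's edges over several chain copies does not lower its degree, and your optimization of $y^{k-2}(1-y)$ only concerns intermediate vertices. More generally, maximum degree $c_kb^{k-1}$ together with a block meeting $\beta_kb^k$ edges forces at least $(\beta_k/c_k)b$ vertices of that block to carry load of order $b^{k-1}$. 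Here $\beta_k/c_k=16/27$ for $k=3$, tends to $1$, and always exceeds $1/k$ since $k\beta_k>c_k$. So the heavy vertices must be most of the block, not a $1/k$ fraction.

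The paper avoids both issues with a decision-tree hypergraph. Blocks are the internal nodes $u$ of a rooted tree in which every internal node has $b$ children. The vertices of $V_u$ are the $b$ tree edges from $u$ to its children, and each root-to-leaf path contributes one hyperedge $e_\ell$, consisting of its last $k$ steps. A transversal is just a choice of child at every internal node. Following these choices from the root reaches a leaf $\ell$ with $e_\ell$ inside the transversal, so no invariant is needed.

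The tree is a complete $b$-ary tree of depth $k-2$ with a copy of $T_0$ attached at each bottom vertex. Here $T_b$ is a root with $b$ leaves, and $T_j$ has a root with $j$ leaf children and $b-j$ children rooting copies of $T_{j+1}$. All estimates then become leaf counts. The block at a $T_j$-root meets about $\frac{j}{b}(1-\frac{j}{b})^{k-1}b^k$ edges, maximized ($\beta_k$) at $j\approx b/k$. A vertex pointing to a $T_j$-root has degree about $\frac{j}{b}(1-\frac{j}{b})^{k-2}b^{k-1}$, maximized ($c_k$) at $j\approx b/(k-1)$. In the heaviest block the load sits on the $(1-\frac1k)b$ subtree-pointing vertices, each of degree about $\frac1k(1-\frac1k)^{k-2}b^{k-1}\le c_kb^{k-1}$, while the $b/k$ leaf-pointing vertices have degree $1$. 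This is the reverse of your head/allowed split.
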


Together with the case $k=2$ from~\cite{GroenlandKaiserTreffersWales2023}, Theorem~\ref{thm:tree-main} completely resolves Problem~\ref{prob:GKTW}. For $k\ge3$, the first estimate in Theorem~\ref{thm:tree-main} gives the required bound $(\beta_k+\varepsilon)b^k$ on the number of stretched edges meeting each block for sufficiently large $b$. The second estimate gives maximum degree at most $2c_kb^{k-1}$ for sufficiently large $b$, where $c_k\sim 1/(e(k-1))$ and hence $c_k\to0$ as $k\to\infty$. 

A uniform blow-up of the same construction yields a counterexample to Problem~\ref{prob:HW}, as stated below.

\begin{theorem}\label{thm:HW-counterexample}
For every fixed integer $k\ge3$, there are infinitely many integers $n$ for
which there exists a $k$-uniform hypergraph $G$ with a partition
$\cP=\{V_1,\ldots,V_n\}$ such that $|V_i|\ge n$ for every $i$, $\Delta(G)<\left(1-\frac1k\right)^{k-1}n^{k-1}$, and $G$ has no independent transversal.
\end{theorem}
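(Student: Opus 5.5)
We derive Theorem~\ref{thm:HW-counterexample} from Theorem~\ref{thm:tree-main} by a uniform blow-up. Fix $k\ge3$ and $b\ge2$, and let $H=H_{b,k}$ with partition $\cP_{b,k}=\{W_1,\dots,W_m\}$, each block of size $b$. For an integer $t\ge1$, form $G=H[t]$ by replacing each vertex $v$ with an independent set $S_v$ of $t$ copies. For every edge $\{v_1,\dots,v_k\}$ of $H$, add all $t^k$ edges $\{u_1,\dots,u_k\}$ with $u_i\in S_{v_i}$. The blocks of $G$ are $V_i=\bigcup_{v\in W_i}S_v$, so $|V_i|=bt$ and $G$ has $n=m$ blocks.

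\textbf{No independent transversal.} Suppose $T$ were an independent transversal of $G$. Projecting each $u\in T$ to the vertex $v$ with $u\in S_v$ gives a transversal $T'$ of $\cP_{b,k}$. Every edge of $H$ inside $T'$ lifts to an edge of $G$ inside $T$. Since $H$ has no independent transversal, neither does $G$.

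\textbf{Degree.} Each vertex of $G$ in $S_v$ has degree $\deg_H(v)\,t^{k-1}$. Hence
\[
\Delta(G)=\Delta(H)\,t^{k-1}=\bigl(c_k+O_k(b^{-1})\bigr)(bt)^{k-1}.
\]

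\textbf{Choice of parameters.} We need $bt\ge n=m$ and $\Delta(G)<(1-1/k)^{k-1}n^{k-1}$. Set $t=\lceil m/b\rceil$, so $bt\ge m$ and $bt< m+b$. Then
\[
\Delta(G)\le \bigl(c_k+O_k(b^{-1})\bigr)m^{k-1}(1+b/m)^{k-1}.
\]
It therefore suffices to show
\[
c_k<\Bigl(1-\tfrac1k\Bigr)^{k-1}=\frac{(k-1)^{k-1}}{k^{k-1}},
\]
and that $b/m\to0$ as $b\to\infty$, i.e.\ the tree construction has $m\gg b$ blocks. The first claim is an elementary inequality. The ratio of the two sides is
\[
\frac{(k-2)^{k-2}k^{k-1}}{(k-1)^{2k-2}}
=\frac{(k-2)^{k-2}\,k^{k-2}}{(k-1)^{2(k-2)}}\cdot\frac{k}{(k-1)^2},
\]
and $(k-2)k<(k-1)^2$ gives first factor $<1$, while $k<(k-1)^2$ for $k\ge3$. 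So the ratio is strictly below $1$, with room to spare: for $k=3$, $c_3=1/4$ versus $4/9$.

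\textbf{Main obstacle.} The second claim, $m\to\infty$ faster than $b$, depends on how $H_{b,k}$ is built in Section~\ref{sec:tree}. A tree construction with branching along root-to-leaf paths should have at least exponentially many blocks in $b$, so $m\ge b^2$ should hold for large $b$. I would verify this from the construction, or otherwise add isolated dummy blocks to $H$ to increase $m$ without changing degrees. This padding works: blocks not meeting any edge need only contain an arbitrary vertex, and extra blocks cannot create an independent transversal. With $m$ chosen so that $b/m$ is small, the error terms $O_k(b^{-1})$ and $(1+b/m)^{k-1}-1$ are absorbed by the gap above for all large $b$. Each large $b$ yields a valid $n=m$, and distinct $b$ give infinitely many $n$ (padding lets us prescribe $m$ freely above the original count), which proves the theorem.
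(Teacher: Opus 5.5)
Your proposal is correct and is essentially the paper's proof: you use the same uniform blow-up with $t=\lceil n/b\rceil$, the same projection argument for the absence of an independent transversal, and the same degree scaling $\Delta(G)=t^{k-1}\Delta(H_{b,k})$, and your explicit check of $c_k<(1-1/k)^{k-1}$ fills in a step the paper only asserts. The step you flagged as the main obstacle follows directly from the construction, as the paper notes: $n=M_{b,k}\ge b^{k-2}N_0$ with $N_0\ge b+1$, so $b/n\to 0$ and no padding is needed (padding would also work, provided each dummy block has $b$ vertices so that it has size $bt\ge n$ after the blow-up).
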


The paper is organized as follows. In Section~\ref{sec:tree}, we introduce the tree construction and determine its blockwise edge count and maximum degree, yielding the constants $\beta_k$ and $c_k$. In Section~\ref{sec:blowup}, we apply a uniform blow-up to obtain Theorem~\ref{thm:HW-counterexample}.

\section{The tree construction}
\label{sec:tree}
In this section, for $k\ge3$ and $b\ge2$, we construct the hypergraph $H_{b,k}$ and verify the properties needed for Theorem~\ref{thm:tree-main}. We first define the underlying rooted tree $\mathcal T_{b,k}$, then construct $H_{b,k}$ from $\mathcal T_{b,k}$, and finally analyze its independent transversals, blockwise edge count, and maximum degree.

\subsection*{The tree $\mathcal T_{b,k}$}
We first define a family of finite rooted trees $T_b,T_{b-1},\ldots,T_0$ iteratively. Let $T_b$ consist of a root with $b$ children, all of which are leaves.
For $j=b-1,b-2,\ldots,0$, having defined $T_{j+1}$, we construct $T_j$ as follows.
The root of $T_j$ has exactly $b$ children: $j$ of them are leaves, while each of the
remaining $b-j$ children is the root of a copy of $T_{j+1}$.
Thus every internal vertex of every $T_j$ has exactly $b$ children.
Let $N_j$ denote the number of internal vertices of $T_j$.
Then $N_b=1$ and, for $0\le j<b$, $N_j=1+(b-j)N_{j+1}$. Hence $N_j=1+\sum_{q=1}^{b-j}
(b-j)(b-j-1)\cdots(b-j-q+1)$.

We now construct $\mathcal T_{b,k}$.
Start with the complete rooted $b$-ary tree of depth $k-2$.
Thus every vertex at levels $0,\ldots,k-3$ has exactly $b$ children, and all vertices
at level $k-2$ are leaves.
For each vertex $v$ at level $k-2$, attach a copy of $T_0$ by identifying $v$ with
the root of that copy.
The resulting rooted tree is denoted by $\mathcal T_{b,k}$.

\begin{figure}[ht]
    \centering
    \includegraphics[width=0.8\linewidth]{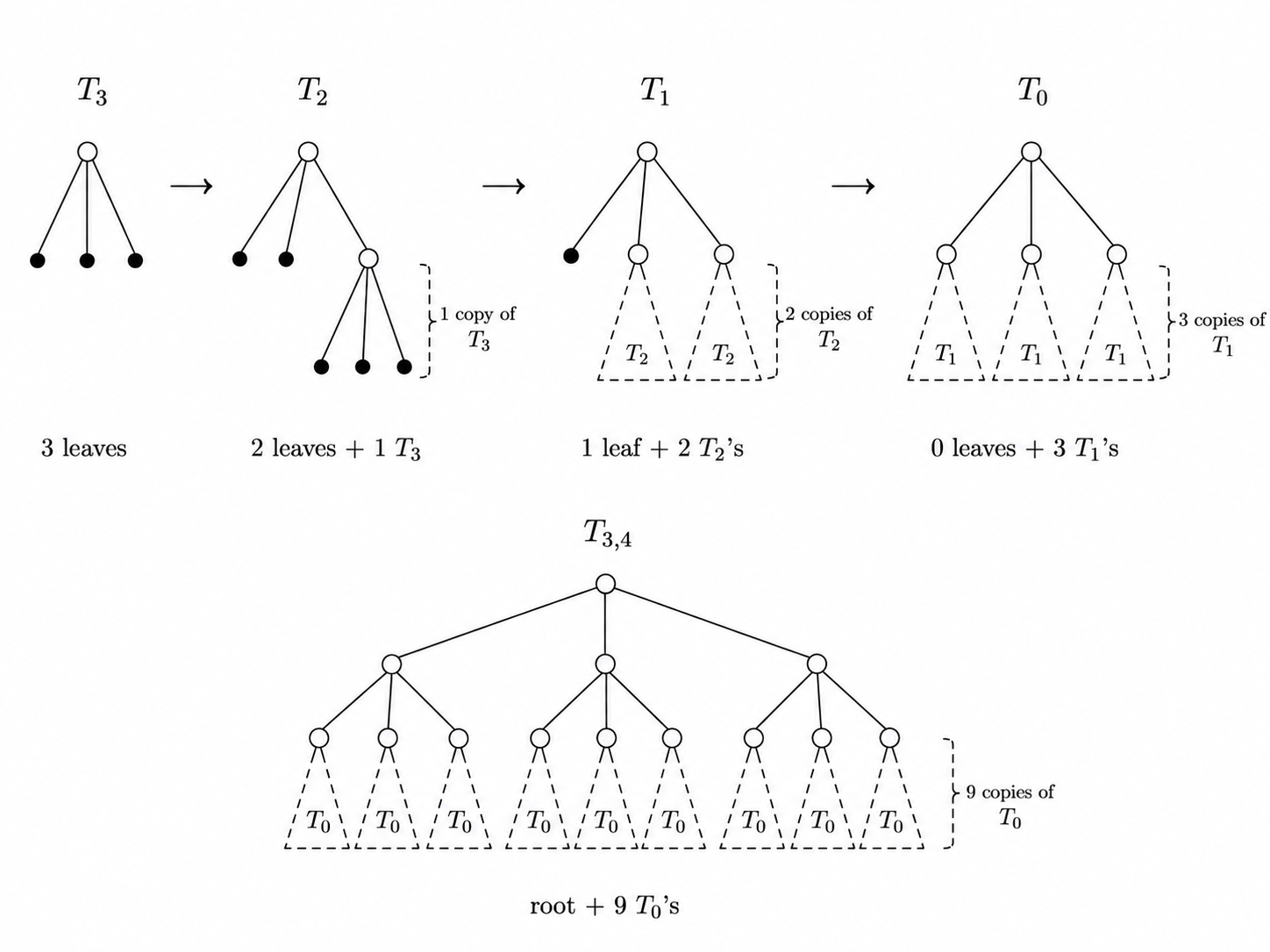}
    \caption{The tree construction for $b=3$. The upper part illustrates the recursive construction of $T_3,T_2,T_1,T_0$, while the lower part shows the resulting tree $\mathcal T_{3,4}$.}
    \label{fig:placeholder}
\end{figure} 

\subsection*{The hypergraph $H_{b,k}$}
We next construct the hypergraph $H_{b,k}$ from the rooted tree $\mathcal T_{b,k}$. For every internal vertex $u\in V(\mathcal{T}_{b,k})$, denote the $b$ edges from $u$ to its children by $t_1,\ldots,t_b$, and define $V_u:=\{(u,t_1),\ldots,(u,t_b)\}$.
The vertex set of $H_{b,k}$ is $V(H_{b,k}) = \bigcup_{\substack{u\in V(\mathcal T_{b,k})\\
u\text{ is internal}}}V_u$, and we take $\mathcal P_{b,k}:=\{V_u:u\text{ is internal in }\mathcal{T}_{b,k}\}$ as its vertex partition.

For each leaf $\ell$ of $\mathcal T_{b,k}$, consider the unique path from the root of $\mathcal T_{b,k}$ to $\ell$. Let $u_0 \xrightarrow{a_0} u_1
\xrightarrow{a_1}\cdots
\xrightarrow{a_{k-2}}u_{k-1}
\xrightarrow{a_{k-1}}u_k=\ell$ be the last $k$ edges of this path, where each $a_i=u_i u_{i+1}$ denotes the corresponding edge of the tree. Define $e_\ell:=\{(u_0,a_0),\ldots,(u_{k-1},a_{k-1})\}$.
Finally, let $E(H_{b,k}):=\{e_\ell:\ell\text{ is a leaf of }\mathcal T_{b,k}\}$.

\subsection*{Properties of $H_{b,k}$}
We first show some basic properties of $H_{b,k}$. Every block in $\mathcal P_{b,k}$ has size $b$, and every $e_\ell$ has size $k$ and meets $k$ distinct blocks. Hence $H_{b,k}$
is $k$-uniform and every edge is stretched.
Let $M_{b,k}$ denote the number of internal vertices of $\cT_{b,k}$. The vertices at levels $0,\ldots,k-3$ contribute $\sum_{i=0}^{k-3} b^i$ internal vertices and level $k-2$ consists of $b^{k-2}$ roots of copies of $T_0$. Therefore
$M_{b,k}=\sum_{i=0}^{k-3}b^i+b^{k-2}N_0$. Since $\cT_{b,k}$ has $bM_{b,k}$ edges and $M_{b,k}+L$ vertices, where $L$ is the number of leaves, it has $(b-1)M_{b,k}+1$ leaves. Consequently, $H_{b,k}$ has
$M_{b,k}$ blocks, $bM_{b,k}$ vertices, and
$(b-1)M_{b,k}+1$ hyperedges.

\begin{lemma}\label{lem:no-IT}
The hypergraph $H_{b,k}$ has no independent transversal with respect to $\cP_{b,k}$.
\end{lemma}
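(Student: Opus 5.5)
A transversal is the same thing as choosing one child-edge at every internal vertex $u$ of $\cT_{b,k}$. Given such a choice, I will follow the chosen edges from the root. Since the tree is finite, this walk ends at a leaf $\ell$, and every vertex on the path from the root to $\ell$ has selected its own outgoing edge along that path. The plan is to show that this path has length at least $k$. Then $e_\ell$, which consists of the pairs $(u_i,a_i)$ for the last $k$ edges of the path, lies entirely inside the transversal, so the transversal is not independent.

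A naive walk does not quite give this. Leaves inside a copy of $T_0$ occur at many depths, and a leaf child of the root of some $T_j$ sits at depth $k-1$ in $\cT_{b,k}$ when that $T_j$ is the attached $T_0$. So the argument must use the freedom that the transversal is fixed in advance, and use the structure of the $T_j$'s more cleverly. The walk that follows the selected edges may end at a shallow leaf. Even so, the transversal restricted to the copies of $T_{j}$ forces some leaf whose last $k$ path edges are all selected, and that leaf need not be the endpoint of the root walk.

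The key lemma I would prove is the following, by downward induction on $j$. Fix a transversal and a copy of $T_j$ whose root $r$ lies at depth at least $k-2$. Suppose the $k-2$ edges above $r$ (or fewer, interpreting appropriately) are all selected, or more precisely that a selected path of length $m$ ends at $r$. Then there is a leaf below $r$ reached by selected edges only. Concretely, at $r$ the selected edge either goes to one of the $b-j$ children that are roots of copies of $T_{j+1}$, in which case we recurse with $j+1$ and a longer selected path, or it goes to a leaf, in which case the selected path has length at least $m+1$. The point is that every leaf child of a $T_j$-root is at distance exactly one from that root, so any selected path reaching a $T_0$-root at depth $k-2$ already has length $k-2$. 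Following the selected edge out of that root gives length $k-1$ if it hits a leaf, and I would need one more edge.

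I expect the main obstacle to be exactly this off-by-one issue. The resolution should be a pigeonhole argument over the $b$ children of a $T_j$-root. Its $j$ leaf children, together with the $b-j$ subtrees, encode how many selected-path leaves have been exhausted: along a selected descent through $T_0\to T_1\to\cdots$, the index $j$ increases each time. I would try to prove that a selected walk can end at a leaf child of a $T_j$-root only if that root is at least one level below depth $k-2$. A walk ending at a leaf child of the $T_0$-root itself gives only $k-1$ selected edges, so I would expect to need to re-examine the edge definition: possibly levels are counted so that $T_0$-roots have $k-1$ ancestors' edges. Concretely, I would first verify the case $k=3$, $b=2$ by hand to pin down the counting, and then run the induction on $j$ with the invariant \emph{selected path of length $\ge k-1$ ending at a $T_j$-root}. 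Under that invariant the statement follows immediately, since every exit from such a root to a leaf produces a selected path of length at least $k$. The $T_b$ case terminates because all children of a $T_b$-root are leaves.
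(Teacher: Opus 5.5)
Your first paragraph is exactly the paper's proof, but you do not finish it. You then assert an obstacle that does not exist and leave it unresolved, so the proposal is incomplete.

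The false step is the claim that a leaf child of the root of an attached $T_0$ sits at depth $k-1$. By definition the root of $T_j$ has exactly $j$ leaf children, so the root of $T_0$ has none. All $b$ of its children are roots of copies of $T_1$, lying at depth $k-1$ in $\cT_{b,k}$. The top complete $b$-ary tree has no leaves of its own, since its level-$(k-2)$ vertices are exactly the $T_0$-roots. Hence every leaf of $\cT_{b,k}$ has depth at least $k$.

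That is the missing observation, and it finishes the argument at once. The walk along selected edges from the root ends at a leaf $\ell$ of depth at least $k$. Every edge of the walk is selected, so in particular the last $k$ are, and $e_\ell\subseteq S$. This depth bound is also what makes $e_\ell$ well defined for every leaf, which is why your instinct to ``re-examine the edge definition'' was pointing at the right spot. A hand check of $k=3$, $b=2$, which you planned, would have shown the minimum leaf depth is $3=k$.

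Everything after your first paragraph --- the pigeonhole idea, the search for a leaf other than the endpoint of the root walk, the downward induction on $j$ --- is unnecessary. As written it also does not close the gap. Your invariant ``a selected path of length $\ge k-1$ ends at a $T_j$-root'' is never established where the walk enters the lower part. The selected path reaching a $T_0$-root has length exactly $k-2$. Upgrading this to a $T_1$-root at length $k-1$ requires precisely the fact that the $T_0$-root has no leaf children, which you explicitly deny. Once that fact is in place, the induction is trivially true but superfluous, and the proof reduces to the paper's one-line walk argument.
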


\begin{proof}
Let $S$ be a transversal in $H_{b,k}$. For every internal vertex $u\in V(\cT_{b,k})$, there is a unique edge $a(u)\in E(\cT_{b,k})$ such that $(u,a(u))\in S$. Start at the root of $\cT_{b,k}$, whenever the current vertex is an internal vertex $u$ of $\cT_{b,k}$, follow the edge $a(u)$ to the next level. Since the tree is finite, this process terminates at a leaf
$\ell$. By construction, the last $k$ edges of the resulting
root-to-leaf path determine a hyperedge $e_\ell \subseteq S$.
Thus every transversal contains an edge of $H_{b,k}$
and is not independent.
\end{proof}

We next consider the parameters needed for Problem~\ref{prob:GKTW} and Theorem~\ref{thm:tree-main}. We first estimate the number of hyperedges meeting each block.

\begin{lemma}\label{prop:block-asymptotic}
For every fixed $k\ge3$, we have:
\[
\max_{V\in\cP_{b,k}}
\bigl|\{e\in E(H_{b,k}):e\cap V\ne\varnothing\}\bigr|
= \left(\beta_k+O_k(b^{-1})\right)b^k,~\text{where}~\beta_k=\frac{(k-1)^{k-1}}{k^k}.
\]
\end{lemma}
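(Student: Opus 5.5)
The plan is to turn the count for a block $V_u$ into a count of leaves lying a bounded distance below $u$ in $\cT_{b,k}$, compute that count exactly inside the recursive trees $T_j$, and then optimise over $j$.

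\textbf{Step 1: reduce to counting leaves.} First I check that every leaf of $\cT_{b,k}$ has depth at least $k$. The root of $T_0$ has no leaf children, so every leaf of a copy of $T_0$ lies at depth at least $2$ below that copy's root. These roots sit at level $k-2$, so every leaf lies at level at least $k$. Hence for every leaf $\ell$ the last $k$ edges of the root-to-$\ell$ path are well defined. By definition, $e_\ell$ meets $V_u$ if and only if $u$ is one of $u_0,\dots,u_{k-1}$ on that path. This happens exactly when $\ell$ is a descendant of $u$ at distance $d\in\{1,\dots,k\}$ from $u$. So the quantity to estimate for $V_u$ is the number of leaves at distance between $1$ and $k$ below $u$.

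\textbf{Step 2: exact count inside the $T_j$.} Let $f_j(d)$ denote the number of leaves of $T_j$ at depth exactly $d$. The recursive definition gives $f_j(1)=j$ and $f_j(d)=(b-j)f_{j+1}(d-1)$, where terms vanish once an index exceeds $b$. Unwinding this yields
\[
f_j(d)=(b-j)(b-j-1)\cdots(b-j-d+2)\,(j+d-1)\le (b-j)^{d-1}(j+d-1).
\]
Every internal vertex $u$ lying inside an attached copy of $T_0$ is the root of a copy of some $T_j$. For such $u$ the count is $\sum_{d=1}^k f_j(d)$. The terms with $d\le k-1$ are $O_k(b^{k-1})$. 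The term $d=k$ is at most $(b-j)^{k-1}j+k\,b^{k-1}$. Maximising $x(b-x)^{k-1}$ over real $x$ gives the maximiser $x=b/k$ and maximum value $\beta_k b^k$. So every such block meets at most $\beta_k b^k+O_k(b^{k-1})$ edges.

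\textbf{Step 3: blocks in the complete part.} Consider $u$ at level $i\le k-3$. Each leaf at distance $d\le k$ below $u$ lies in a copy of $T_0$ whose root is one of $b^{k-2-i}$ descendants of $u$, at depth $m=d-(k-2-i)$ inside that copy. The bound $f_0(m)\le m\,b^{m-1}$ then gives at most $\sum_d b^{k-2-i}\cdot k\,b^{d-(k-2-i)-1}=O_k(b^{k-1})$ leaves. This is negligible, so the upper bound holds for every block.

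\textbf{Step 4: matching lower bound.} For the lower bound, take $j=\lfloor b/k\rfloor\ge 0$. A copy of $T_j$ occurs inside $T_0$, since $T_0\supset T_1\supset\cdots$ along non-leaf children. Its root $u$ gives
\[
f_j(k)\ge (b-j-k)^{k-1}j=\beta_k b^k-O_k(b^{k-1}).
\]
I expect the only delicate point to be the bookkeeping in Step 1, namely that all leaves are deep enough and that the reduction is exact. The optimisation in Steps 2 and 4 is routine calculus.
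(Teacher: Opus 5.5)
Your proposal is correct and follows the paper's proof essentially step for step: you reduce to counting leaves at distance $1,\dots,k$ below $u$, use the exact count $(j+q-1)(b-j)(b-j-1)\cdots(b-j-q+2)$ inside $T_j$, show that blocks in the top complete part contribute only $O_k(b^{k-1})$, and optimise $x(1-x)^{k-1}$ at $x=1/k$, with your explicit witness $j=\lfloor b/k\rfloor$ making the paper's lower bound concrete. One small fix: maximise $x(b-x)^{k-1}$ over $x\in[0,b]$ rather than over all real $x$, since for odd $k$ the function is unbounded above on the whole real line.
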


\begin{proof}
Consider an internal vertex $u\in V(\cT_{b,k})$ which is the root of a copy of $T_j$. A hyperedge $e_\ell\in E(H_{b,k})$ meets $V_u$ of $\cP_{b,k}$ if and only if $\ell$ is a descendant leaf of $u$ and $q\le k$, where $q$ is the distance between $\ell$ and $u$ in $\cT_{b,k}$. 
For $q=1$, there are exactly $j$ leaves at distance one. 
For $2\le q\le b-j+1$, the number of leaves at distance exactly $q$ below the root $u$ of $T_j$ is $(j+q-1)(b-j)(b-j-1)\cdots(b-j-q+2)$. 
Indeed, to reach one such leaf, the first $q-1$ steps must successively enter roots of copies of
$T_{j+1},T_{j+2},\ldots,T_{j+q-1}$.
The numbers of choices for these first $q-1$ steps are respectively $b-j,\ b-j-1, \dots,\ b-j-q+2$.
After these $q-1$ steps, we are at the root of a copy of $T_{j+q-1}$, which has exactly $j+q-1$ leaf children.
Hence the number of such leaves is $(j+q-1)(b-j)(b-j-1)\cdots(b-j-q+2)$. It follows that the number of hyperedges meeting $V_u$ is exactly
\begin{align*}
 B_{b,k}(j):&=j+\sum_{q=2}^{k}(j+q-1)(b-j)(b-j-1)\cdots(b-j-q+2)\\
&=b\cdot\frac{j}{b}+\sum_{q=2}^k b^q
\left(\frac{j}{b}+\frac{q-1}{b}\right)
\prod_{r=0}^{q-2}\left(1-\frac{j}{b}-\frac{r}{b}\right)\\
&=\left(\frac{j}{b}(1-\frac{j}{b})^{k-1}+O_k(b^{-1})\right)b^k.   
\end{align*}

It remains to consider the blocks corresponding to vertices in the top complete $b$-ary tree of $\cT_{b,k}$. Let $u$ be a vertex at level $s$, where $0\le s\le k-3$.
Then $u$ has exactly $b^{k-2-s}$ descendants at level $k-2$, each of which is the root of a copy of $T_0$.
Fix one such root $v$, and let $\ell$ be a leaf in the corresponding copy of $T_0$.
Since $v$ lies $k-2-s$ levels below $u$, the hyperedge $e_\ell$ meets $V_u$ if and only if $\ell$ lies at most $s+2$ levels below $v$.
Hence, in each copy of $T_0$, the relevant leaves are exactly those at distance
at most $s+2$ from its root.
Therefore the number of hyperedges meeting $V_u$ is exactly
\begin{align*}
C_{b,k}(s)&:= b^{k-2-s}B_{b,s+2}(0) = b^{k-2-s}\sum_{q=2}^{s+2}(q-1)b(b-1)\cdots(b-q+2)\\
&=b^{k-2-s}\sum_{q=2}^{s+2}(q-1)b^{q-1}
\prod_{r=1}^{q-2}\left(1-\frac{r}{b}\right)\\
&=(s+1+O_k(b^{-1}))b^{k-1}.
\end{align*}

Note that
$\max_{0\le j\le b}B_{b,k}(j)
=\left(\max_{0\le j\le b}\frac{j}{b}
\left(1-\frac{j}{b}\right)^{k-1}+O_k(b^{-1})\right)b^k
=(\beta_k+O_k(b^{-1}))b^k$,
since the function $x(1-x)^{k-1}$ is maximized at $x=1/k$.
Moreover,
$\max_{0\le s\le k-3}C_{b,k}(s)
=(k-2+O_k(b^{-1}))b^{k-1}
=O_k(b^{k-1})$.
Thus we have 
\[
\max_{V\in\cP_{b,k}}
\bigl|\{e\in E(H_{b,k}):e\cap V\ne\varnothing\}\bigr|
=\max\left\{
\max_{0\le j\le b}B_{b,k}(j),
\max_{0\le s\le k-3}C_{b,k}(s)
\right\}
=(\beta_k+O_k(b^{-1}))b^k.\]
\end{proof}

Now we compute the maximum degree in $H_{b,k}$.

\begin{lemma}\label{prop:degree-asymptotic}
For every fixed $k\ge3$,
\[
\Delta(H_{b,k})
=
\left(c_k+O_k(b^{-1})\right)b^{k-1}, ~\text{where}~
c_k=\frac{(k-2)^{k-2}}{(k-1)^{k-1}}.
\]
\end{lemma}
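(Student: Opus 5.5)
The plan is to compute the degree of every vertex $(u,a)$ of $H_{b,k}$ exactly, in the same way as in the proof of Lemma~\ref{prop:block-asymptotic}, and then maximize. Fix an internal vertex $u$ of $\cT_{b,k}$ and a tree edge $a=uw$ to a child $w$.

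\textbf{Reduction to a leaf count below $w$.} By definition, $e_\ell$ contains $(u,a)$ if and only if the following two conditions hold:
\begin{itemize}[label=$\circ$]
\item the root-to-$\ell$ path passes through $a$, i.e.\ $\ell$ is a descendant leaf of $w$ (or $\ell=w$);
\item $a$ is among the last $k$ edges of that path, i.e.\ $\operatorname{dist}(u,\ell)\le k$, equivalently $\operatorname{dist}(w,\ell)\le k-1$.
\end{itemize}
So $\deg(u,a)$ equals the number of leaves in the subtree rooted at $w$ at distance at most $k-1$ from $w$. If $w$ is itself a leaf, this number is $1$.

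\textbf{Case analysis on the position of $w$.}
\begin{enumerate}[label=(\roman*)]
\item Suppose $u$ is the root of a copy of $T_j$ and $w$ is the root of a copy of $T_{j+1}$, with $0\le j\le b-1$. Then the count is exactly the quantity from the proof of Lemma~\ref{prop:block-asymptotic} with $k$ replaced by $k-1$:
\[
\deg(u,a)=B_{b,k-1}(j+1).
\]
This is legitimate because that formula only needed $k-1\ge2$, which holds since $k\ge3$. By the same expansion,
\[
B_{b,k-1}(j+1)=\left(x(1-x)^{k-2}+O_k(b^{-1})\right)b^{k-1},\qquad x=\tfrac{j+1}{b}.
\]
\item Suppose $u$ lies at level $s\le k-3$ of the top complete $b$-ary tree. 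Then $w$ is either at level $s+1\le k-3$ or is the root of a copy of $T_0$. The leaves counted lie in the $b^{k-3-s}$ copies of $T_0$ below $w$, within distance $s+2$ of their roots. So $\deg(u,a)=b^{k-3-s}B_{b,s+2}(0)$ (read as $B_{b,k-1}(0)$ when $s=k-3$). Exactly as in the computation of $C_{b,k}(s)$, this is $O_k(b^{k-2})$, because the term with $j=0$ has vanishing leading coefficient.
\end{enumerate}

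\textbf{Maximizing.} On $[0,1]$ the function $g(x)=x(1-x)^{k-2}$ is maximized at $x=1/(k-1)$, with maximum value
\[
g\!\left(\tfrac{1}{k-1}\right)=\frac{(k-2)^{k-2}}{(k-1)^{k-1}}=c_k.
\]
This gives the upper bound $\Delta(H_{b,k})\le (c_k+O_k(b^{-1}))b^{k-1}$. For the matching lower bound, choose $j+1=\lceil b/(k-1)\rceil$, which is at most $b$. Then $|x-1/(k-1)|\le 1/b$, and since $g$ is Lipschitz on $[0,1]$ with constant depending only on $k$, we get $g(x)=c_k+O_k(b^{-1})$. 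The vertices from case~(ii) and the leaf-children, with degrees $O_k(b^{k-2})$ and $1$, are negligible.

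\textbf{Main obstacle.} The step needing care is that the error term $O_k(b^{-1})$ is uniform in $j$. To check this, write $B_{b,k-1}(j+1)$ as a sum over $q\le k-1$ of $b^q$ times a polynomial in $j/b$ and $1/b$. The terms $(q-1)/b$ and $r/b$ with $r\le k$ contribute at most $O_k(b^{q-1})$ uniformly, since $0\le j/b\le1$. Hence only the $q=k-1$ leading term $x(1-x)^{k-2}b^{k-1}$ survives at order $b^{k-1}$, and the error is uniform in $j$.
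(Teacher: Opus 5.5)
Your proposal is correct and follows essentially the paper's proof. You reduce $\deg(u,a)$ to counting leaves within distance $k-1$ below the child $w$, which is $B_{b,k-1}(\cdot)$ (the paper's $A_{b,k}$) when $w$ roots a copy of some $T_j$ and is $O_k(b^{k-2})$ when $w$ is in the top $b$-ary tree or roots a copy of $T_0$, and you then maximize $x(1-x)^{k-2}$ at $x=1/(k-1)$; your explicit uniform-in-$j$ error bound and lower-bound choice $j+1=\lceil b/(k-1)\rceil$ just spell out details the paper leaves implicit.
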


\begin{proof}
Fix a vertex $(u,a)\in V(H_{b,k})$, where $a$ is an edge from $u$ to one of its children, say $w$. The vertex $(u,a)$ belongs to $e_\ell$ if and only if $a$ is one of the last $k$ edges of the path between the root of $\cT_{b,k}$ and $\ell$. Equivalently, $\ell$ is a descendant leaf of $w$ that can be reached from $w$ by at most $k-1$ edges.

Suppose first that $w$ is the root of a copy of $T_j$. Using the same leaf count as above, the degree of $(u,a)$ is
\begin{align*}
  A_{b,k}(j)&:=j+\sum_{q=2}^{k-1}
(j+q-1)(b-j)(b-j-1)\cdots(b-j-q+2)\\
&=\left(
\frac{j}{b}\left(1-\frac{j}{b}\right)^{k-2}
+O_k(b^{-1})
\right)b^{k-1}.  
\end{align*}

Now suppose that $w$ is a vertex at level $s$, where $1\le s\le k-3$.
There are $b^{k-2-s}$ roots of copies of $T_0$ below $w$, and the relevant
leaves are those that can be reached from these roots by at most $s+1$ edges.
Thus the degree is
\begin{align*}
P_{b,k}(s)&:=b^{k-2-s}\sum_{q=2}^{s+1}
(q-1)b(b-1)\cdots(b-q+2)\\
&=\left(s+O_k(b^{-1})\right)b^{k-2}.  
\end{align*}

Finally, if $w$ is a leaf, then $(u,a)$ has degree one. These are all
possible cases.

With the convention that the maximum over an empty set is zero, we have
\[\Delta(H_{b,k})
= \max\left\{
\max_{0\le j\le b}A_{b,k}(j),
\max_{1\le s\le k-3}P_{b,k}(s)
\right\}.\]
Since the function $x(1-x)^{k-2}$ is maximized at $x=1/(k-1)$, we have $\max_{0\le j\le b}A_{b,k}(j)=\left(c_k+O_k(b^{-1})\right)b^{k-1}$.
Moreover, $\max_{1\le s\le k-3}P_{b,k}(s)=O_k(b^{k-2})$. Therefore $\Delta(H_{b,k})=\left(c_k+O_k(b^{-1})\right)b^{k-1}$, as desired.
\end{proof}

\begin{remark}
The maximum-degree bound in Lemma~\ref{prop:degree-asymptotic} is smaller by a factor of order $k$ than the degree scale appearing in Problem~\ref{prob:HW}. Indeed, writing $\gamma_k=(1-1/k)^{k-1}$, we have
$c_k/\gamma_k=\Theta(1/k)$.
Thus, after the uniform blow-up in Section~3, the resulting counterexamples
have maximum degree smaller by a factor of order $k$ than the proposed
threshold $\gamma_k n^{k-1}$.

For $k=3$, this comparison is particularly explicit. In this case
$A_{b,3}(j)=b+bj-j^2$, and hence
$\Delta(H_{b,3})=b+\lfloor b^2/4\rfloor$.
Thus the asymptotic maximum-degree coefficient is $1/4$, whereas
$\gamma_3=4/9$.

\end{remark}
Theorem~\ref{thm:tree-main} can be proved by 
Lemma~\ref{lem:no-IT}, Lemma~\ref{prop:block-asymptotic}, and Lemma~\ref{prop:degree-asymptotic}.
\begin{corollary}\label{cor:GKTW}
Problem~\ref{prob:GKTW} has an affirmative answer. By the graph construction of Groenland et al.~\cite{GroenlandKaiserTreffersWales2023}, one may take $\eta=0.854$ for its first part. For the stronger statement, one may take $\eta(2)=0.854$ and $\eta(k)=2c_k$ for every $k\ge3$, so that $\eta(k)\to0$ as $k\to\infty$.
\end{corollary}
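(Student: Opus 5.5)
The plan is to derive Corollary~\ref{cor:GKTW} directly from Theorem~\ref{thm:tree-main}, with the case $k=2$ taken from \cite{GroenlandKaiserTreffersWales2023}.

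\textbf{Case $k\ge3$.} Fix $\varepsilon>0$ and take $H=H_{b,k}$ with the partition $\cP_{b,k}$.

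\begin{enumerate}
\item By Theorem~\ref{thm:tree-main} (or Lemma~\ref{lem:no-IT}), $H$ has no independent transversal.
\item Every block has size exactly $b$, so the requirement "size at least $b$" holds.
\item Every edge is stretched. Hence the number of stretched edges meeting a block equals the number of edges meeting it. By Lemma~\ref{prop:block-asymptotic} this is at most $(\beta_k+C_kb^{-1})b^k$ for some constant $C_k$ and all $b\ge2$. This is at most $(\beta_k+\varepsilon)b^k$ once $b\ge C_k/\varepsilon$.
\item By Lemma~\ref{prop:degree-asymptotic}, $\Delta(H)\le(c_k+C_k'b^{-1})b^{k-1}$. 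This is at most $2c_kb^{k-1}$ once $b\ge C_k'/c_k$.
\end{enumerate}

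One should note that the $O_k(b^{-1})$ terms mean $|{\rm error}|\le C_k/b$ for all $b$ beyond some threshold, which is all that is needed.

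\textbf{Choice of $\eta(k)$.} Set $\eta(k)=2c_k$ and check that $2c_k<1$. We have $c_3=1/4$. In general, using $(1-1/(k-1))^{k-2}\le1$,
$$c_k=\frac{1}{k-1}\Bigl(1-\frac1{k-1}\Bigr)^{k-2}\le \frac1{k-1}\le\frac12,$$
so $2c_k\le1$. Equality $c_k=1/2$ would force $k=3$, but $c_3=1/4$. Hence $2c_k<1$, and in fact $2c_k\le 1/2$ for all $k\ge3$. The same bound shows $c_k\le 1/(k-1)\to0$, so $\eta(k)\to0$ as $k\to\infty$.

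\textbf{Case $k=2$.} Here we invoke the graph construction of \cite{GroenlandKaiserTreffersWales2023}. I am relying on that paper for the statement that it achieves block edge count $(\beta_2+\varepsilon)b^2$ with maximum degree at most $0.854\,b$; this is the one external input.

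\textbf{Uniform constant.} For the first part of the problem, a single $\eta$ must work for every $k$. Take $\eta=0.854$. Since $2c_k\le1/2<0.854$ for all $k\ge3$, the constructions above satisfy the degree bound with this $\eta$, and the $k=2$ construction satisfies it by assumption.

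\textbf{Main obstacle.} The only nontrivial point is making the asymptotic estimates effective, namely converting the $O_k(b^{-1})$ error terms into explicit thresholds "for all sufficiently large $b$". This is routine because the lemmas give exact polynomial expressions in $b$ for fixed $k$.
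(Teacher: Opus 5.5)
Your proposal is correct and follows the paper's route: Theorem~\ref{thm:tree-main} (via the three lemmas) gives the case $k\ge3$ with $\eta(k)=2c_k$, while the case $k=2$ and the uniform constant $\eta=0.854$ come from the construction of Groenland et al., exactly as the paper argues. One small slip: the bound $c_k\le 1/(k-1)$ alone does not give your claimed $2c_k\le 1/2$ at $k=4$ (it only gives $2/3$), but the claim is true because $c_k=\max_{x\in[0,1]}x(1-x)^{k-2}$ is nonincreasing in $k$, and in any case $2/3<0.854$ already suffices for the uniform constant.
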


\section{The blow-up construction}
\label{sec:blowup}

We now construct the hypergraph $G_{b,k}$ from $H_{b,k}$ by a uniform blow-up of its vertices. Let $n:=M_{b,k}$ be the number of blocks in
$\mathcal P_{b,k}$, and set $t:=\lceil n/b\rceil$.
For each vertex $x\in V(H_{b,k})$, replace $x$ by a set $X_x:=\{x^{(1)},\ldots,x^{(t)}\}$ of $t$ copies. For each edge $e=\{x_1,\ldots,x_k\}\in E(H_{b,k})$, include in $G_{b,k}$ all edges of the form $\{x_1^{(s_1)},\ldots,x_k^{(s_k)}\}$, $(s_1,\ldots,s_k)\in[t]^k$. Thus each edge of $H_{b,k}$ gives rise to $t^k$ edges in $G_{b,k}$.
For every internal vertex $u$ of $\cT_{b,k}$, recall that $V_u=\{(u,t_1),\ldots,(u,t_b)\}$ is a block of $\mathcal P_{b,k}$.
Define the corresponding block of $G_{b,k}$ by $W_u:=\bigcup_{x\in V_u}X_x$. 
Let $\mathcal Q_{b,k}:=\{W_u:u\text{ is an internal vertex of }\cT_{b,k}\}$.
Then $\mathcal Q_{b,k}$ has the same number of blocks as $\mathcal P_{b,k}$, and each block has size $bt\ge n$.

\begin{proof}[Proof of Theorem~\ref{thm:HW-counterexample}]
The blow-up does not change the number of blocks, so $\mathcal Q_{b,k}$ has exactly $n$ blocks. Moreover, each block has size $bt\ge n$.
For every $x^{(s)}\in V(G_{b,k})$, each edge of $H_{b,k}$ containing $x$ gives rise to exactly $t^{k-1}$ edges of $G_{b,k}$ containing $x^{(s)}$. Hence $d_{G_{b,k}}(x^{(s)})=t^{k-1}d_{H_{b,k}}(x)$, and therefore
\[\Delta(G_{b,k})=t^{k-1}\Delta(H_{b,k}).\]

We next show that $G_{b,k}$ has no independent transversal. Let $S$ be
a transversal of $\mathcal Q_{b,k}$, and project each selected vertex
$x^{(s)}$ to $x$. The projected set is a transversal of
$\mathcal P_{b,k}$, and hence, by Lemma~\ref{lem:no-IT}, contains an edge
$\{x_1,\ldots,x_k\}$ of $H_{b,k}$. The corresponding selected copies of $x_1,\ldots,x_k$ form an edge of $G_{b,k}$. Thus $S$ is not independent.

It remains to verify the maximum-degree condition. Since $t=\lceil n/b\rceil$, we have $1\le bt/n<1+b/n$. Moreover, $n=M_{b,k}\ge b^{k-2}N_0$ and $N_0\ge b+1$, so $b/n=o_b(1)$.
Hence $bt/n=1+o_b(1)$. By Lemma~\ref{prop:degree-asymptotic},
\[
\frac{\Delta(G_{b,k})}{n^{k-1}}
=
\frac{\Delta(H_{b,k})}{b^{k-1}}
\left(\frac{bt}{n}\right)^{k-1}
=
c_k+o_b(1).
\]
Let $\gamma_k:=(1-1/k)^{k-1}$. Since $c_k<\gamma_k$, for all
sufficiently large $b$ we have
$\Delta(G_{b,k})<\gamma_k n^{k-1}$.
Thus $G_{b,k}$ has exactly $n$ blocks, each of size at least $n$,
satisfies the required maximum-degree bound, and has no independent
transversal.

Finally, since $M_{b,k}\to\infty$ as $b\to\infty$, this construction
gives infinitely many values of $n$.
\end{proof}

\section*{Acknowledgements}

We would like to thank Weichan Liu for suggesting this problem and for the helpful discussions and guidance that contributed to this work.

\section*{Declaration on the use of AI}

During the preparation of this work, GPT-5.6 was used to propose the core construction underlying the main results. The authors subsequently organized and formalized the construction, independently verified the mathematical arguments and the associated estimates, and identified that the construction resolves two previously posed open problems, namely Problem 13 of Groenland, Kaiser, Treffers, and Wales \cite{GroenlandKaiserTreffersWales2023} and Problem 26 of Haxell and Wdowinski \cite{HaxellWdowinski2024b}. The authors take full responsibility for the correctness and presentation of the results.

\end{document}